\theoremstyle{plain}
    \newtheorem{theorem}{Theorem}[section]
    \newtheorem{proposition}[theorem]{Proposition}
    \newtheorem{conjecture}[theorem]{Conjecture}
\theoremstyle{definition}
    \newtheorem{definition}[theorem]{Definition}
    \newtheorem*{thank}{Acknowledgements}
\theoremstyle{remark}
\numberwithin{equation}{section}
\newcommand{\ZZ}{\mathbb{Z}}
\newcommand{\real}{\mathbb{R}}
\newcommand{\complex}{\mathbb{C}}
\newcommand{\OO}{\mathcal{O}}
\newcommand{\E}{\mathcal{E}}
\newcommand{\M}{\mathcal{M}}
\newcommand{\N}{\mathbb{N}}
\newcommand{\Lie}{\operatorname{Lie}}
\newcommand{\Lieg}{\mathfrak{g}}
\newcommand{\Liek}{\mathfrak{k}}
\newcommand{\Lieh}{\mathfrak{h}}
\newcommand{\Lien}{\mathfrak{n}}
\newcommand{\Lieb}{\mathfrak{b}}
\newcommand{\Liec}{\mathfrak{c}}
\newcommand{\Lies}{\mathfrak{s}}
\newcommand{\Liest}{\mathfrak{st}}
\newcommand{\Lieu}{\mathfrak{u}}
\newcommand{\Liet}{\mathfrak{t}}
\newcommand{\Liea}{\mathfrak{a}}
\newcommand{\LLieg}{\mathfrak{g}^\circ}
\newcommand{\LLieb}{\mathfrak{b}^\circ}
\newcommand{\LLien}{\mathfrak{n}^\circ}
\newcommand{\LLiek}{\mathfrak{k}^\circ}
\newcommand{\LLies}{\mathfrak{s}^\circ}
\newcommand{\LLieh}{\mathfrak{h}^\circ}
\newcommand{\LLiet}{\mathfrak{t}^\circ}
\newcommand{\LLiea}{\mathfrak{a}^\circ}
\newcommand{\UU}{\mathcal{U} \Lieg^\circ}
\newcommand{\Dh}{\mathcal{D}_\mathfrak{h}}
\newcommand{\DQ}{\D^{\lambda}_{X \gets Q}}
\newcommand{\DhQ}{\mathcal{D}_{\Lieh, Q}}
\newcommand{\DlQ}{\mathcal{D}_{Q}^{\lambda_t}}
\newcommand{\Dlambda}{\mathcal{D}_\lambda}
\newcommand{\MM}{\mathcal{M}}
\newcommand{\Ug}{\mathcal{U}\Lieg}
\newcommand{\Uh}{\mathcal{U}\Lieh}
\newcommand{\Zg}{{\mathcal{Z}\Lieg}}
\newcommand{\U}{\mathcal{U}}
\newcommand{\D}{\mathcal{D}}
\newcommand{\LL}{\mathcal{L}}
\newcommand{\I}{\mathcal{I}}
\newcommand{\Ind}{\operatorname{Ind}}
\newcommand{\Prin}{\mathcal{P}}
\newcommand{\supp}{\operatorname{Supp}}
\newcommand{\Rees}{\operatorname{Rees}}
\newcommand{\spn}{\operatorname{span}}
\newcommand{\SLR}{SL(2, \real)}
\newcommand{\SLC}{SL(2, \complex)}
\newcommand{\SU}{SU(1,1)}
\newcommand{\slc}{\mathfrak{sl}(2, \complex)}
\newcommand{\PP}{\mathbb{P}^1}
\newcommand{\dz}{\partial_{z}}
\newcommand{\dzz}{\partial_{\zeta}}
\newcommand{\Img}{\mathfrak{Im}}
\begin{document}

\title{Mackey analogy via $\D$-modules in the example of $SL(2,\real)$}
\author{Qijun Tan, Yijun Yao, Shilin Yu}



\setlength{\abovedisplayskip}{7pt}
\setlength{\belowdisplayskip}{7pt}

\maketitle

\begin{abstract}
  A conjecture by Mackey and Higson claims that there is close relationship between irreducible representations of a real reductive group and those of its Cartan motion group. The case of irreducible tempered unitary representations has been verified recently by Afgoustidis. We study the admissible representations of $\SLR$ by considering families of $\D$-modules over its flag varieties. We make a conjecture which gives a geometric understanding of the Makcey-Higson bijection in the general case.
\end{abstract}

\section{Introduction}\label{sec:intro}

Inspired by the concept from physics of the contraction of a Lie group to a Lie subgroup (\cite{InonuWigner}), Mackey suggested in 1975 (\cite{Mackey}) that there should be a correspondence between ``almost" all the irreducible unitary representations of a noncompact semisimple group $G_\real$ and the irreducible unitary representations of its contraction to a maximal compact subgroup $K_\real$. The contraction group is defined to be the group
\[ G_{\real,0} :=K_\real \ltimes \Lieg_\real / \Liek_\real, \]
where $\Lieg_\real = \Lie(G_\real)$ and $\Liek_\real = \Lie(K_\real)$ are the corresponding Lie algebras and $\Lieg_\real / \Liek_\real$ is regarded as an abelian group with the usual addition of vectors. The group $G_{\real,0}$ is called the \emph{Cartan motion group} of $G_\real$. It is a surprising analogy since the algebraic structures of the groups $G_\real$ and $G_{\real,0}$ are quite different. The representation theory of the semisimple group $G_\real$ is rather complicated and even decades after Mackey, the problem of finding an effective discription of the unitary dual $\widehat{G_\real}$ is not fully solved yet. On the other hand, Mackey himself developed a full theory of representations of semidirect product groups like $G_{\real,0}$, so the unitary dual of $G_{\real,0}$ is much easier to describe. 

Later Connes pointed out that there is a connection between the Mackey analogy and the Connes-Kasparov conjecture in K-theory of $C^*$-algebras (\cite{BCH}), which suggests that the reduced dual, or equivalently, the tempered dual of $G_\real$ should correspond to the unitary dual of $G_{\real,0}$, at least K-theoretically. Following Connes' insight, Higson suggested that the correspondence  ought to be a set theoretical bijection. In other words, Mackey analogy can be regarded as a stronger version of the Connes-Kasparov conjecture. In his paper \cite{HigsonMackeyKtheory}, Higson examined the case where $G_\real$ is a connected complex semisimple group (regarded as a real group) and showed that there is a natural bijection between the reduced duals of $G_\real$ and $G_{\real,0}$, with the already known classification of irreducible tempered representations on both sides in hand. Later in \cite{HigsonMackey}, he strengthened this result by showing that there is even a natural bijection between the admissible duals of $G_\real$ and $G_{\real,0}$ when $G_\real$ is a complex group.

Whether there is a Mackey bijection between tempered duals or even admissible duals when $G_\real$ is a real group has remained unsolved for a long time. Recently in \cite{Afgoustidis}, Afgoustidis has established a very clean and natural bijection between the tempered dual of $G_\real$ and the unitary dual of $G_{\real,0}$ using the Knapp-Zuckerman classification of tempered irreducible representations of $G_\real$ (\cite{KnappZuckerman_I}, \cite{KnappZuckerman_II}). This bijection is in particular an extension of Vogan's bijection between irreducible tempered representation of $G_\real$ with real infinitesimal characters and their unique minimal $K$-types (\cite{Vogan}). Moreover, Afgoustidis used his Mackey-Higson bijection to give a new proof of the Connes-Kasparov isomorphism for real reductive Lie groups (\cite{Afgoustidis_CK}).

Afgoustidis also studied the Mackey analogy at the level of representation spaces by writing down explict contractions from representations of $G_\real$ to that of $G_{\real,0}$ in the case of spherical principal series representations, discrete series and limit of discrete series representations. However, it is not clear yet if there is a general way to construct such contractions for all tempered representations, even for those with real infinitesimal characters. One difficulty is that the bijection behaves poorly at the level of representation spaces. For instance, while there are unitary unitary irreducible representations of $G_{\real,0}$ whose underlying vector spaces are of finite dimensions (on which the $\Lieg_\real / \Liek_\real$ part of $G_{\real,0}$ acts trivially), all nontrivial unitary representations of $G_\real$ are infinite-dimensional. 

We propose to study the Mackey analogy from the perspective of $\D$-modules. In particular, we regard representations as $\D$-modules over the flag variety via the Beilinson-Bernstein localization theorem (\cite{BeilinsonBernstein}) and show how to deform them to get representations of the Cartan motion group.  We expect that our construction works more generally for admissible representations of $G_\real$. We will state our main conjecture in Conjecture \ref{conj:Mackey} at the end of \S~\ref{subsec:deform}. We will compute the example of $\SLR$ in \S~\ref{sec:sl2} to illustrate our conjecture.

\begin{thank}
  We would like to thank Nigel Higson for introducing us to the subject and numerous discussions.
\end{thank}

\section{Mackey-Higson correspondence}\label{sec:Mackey}

\subsection{Basics of $\D$-modules}\label{subsec:Dmod}

We recall the construction of twisted $\D$-modules on the flag variety from \cite{Milicic}. Let $X$ be the flag variety of $G$, which is the variety of all Borel subalgebras $\Lieb$ in $\Lieg$. Let $\LLieg=\OO_X \otimes_\complex \Lieg$ be the sheaf of local sections of the trivial bundle $X \times \Lieg$. Let $\LLieb$ be the vector bundle on $X$ whose fiber $\Lieb_x$ at any point $x$ of $X$ is the Borel subalgebra $\Lieb \subset \Lieg$ corresponding to $x$. Similarly, let $\LLien$ be the vector bundle whose fiber $\Lien_x$ is the nilpotent ideal $\Lien_x = [\Lieb_x, \Lieb_x]$ of the corresponding Borel subalgebra $\Lieb$. $\LLieb$ and $\LLien$ can be considered subsheaves of $\LLieg$. The sheaf $\LLieg$ has a natural structure of Lie algebroid: the differential of the action of $G$ on $X$ defines a natural map from $\Lieg$ to the tangent bundle $TX$ of $X$ and hence induces an anchor map $\tau: \LLieg \to TX$. The Lie structure on $\Lieg$ is given by
  \[  [f \otimes \xi, g \otimes \eta] = f \tau(\xi) g \otimes \eta - g \tau(\eta)f \otimes \xi + fg \otimes [\xi, \eta] \]
for any $f, g \in \OO_X$ and $\xi, \eta \in \Lieg$. The kernel of $\tau$ is exactly $\LLieb$, so $\LLieb$ and $\LLien$ are sheaves of Lie ideals in $\LLieg$.

We then form the universal enveloping algebra of the Lie algebroid $\LLieg$, which is the sheaf $\UU=\OO_X \otimes_\complex \Ug$ of associative algebras with the multiplication defined by
  \[  (f \otimes \xi) (g \otimes \eta) = f \tau(\xi) g \otimes \eta + fg \otimes \xi \eta \]
for $f,g \in \OO_X$ and $\xi \in \Lieg$, $\eta \in \Ug$. The sheaf of left ideals $\UU \LLien $ generated by $\LLien$ in $\UU$ is a sheaf of two-sided ideals in $\UU$, hence the quotient $\Dh = \UU /  \UU \LLien$ is a sheaf of associative algebras on $X$.

The natural morphism from $\LLieg$ to $\Dh$ induces an inclusion of $\LLieh=\LLieb/\LLien$ into $\Dh$. The sheaf $\LLieh$ turns out to be a trivial vector bundle and its global sections over $X$ is the \emph{abstract Cartan algebra} $\Lieh$ of $\Lieg$, which is independent of the choice of Borel subalgebra: for any Cartan subalgebra $\Liec$ of $\Lieg$ and any Borel subalgebra $\Lieb_x$ containing $\Liec$, the composition $\Liec \to \Lieb_x \to \Lieb_x / \Lien_x \simeq \Gamma(X,\LLieh) = \Lieh$ is a canonical isomorphism which is independent of choice of $x \in X$. Moreover, we also have abstract root system $\Sigma$ and positive root system $\Sigma^+$ in $\Lieh^*$ which consists of the set of roots of $\Lieh$ in $\Lieg/\Lieb_x$, as well as the abstract Weyl group $W$. The natural action of $G$ on $\LLieh$ is trivial and embedding $\LLieh \hookrightarrow \Dh$ identifies the universal enveloping algebra $\Uh = S \Lieh$ of the abelian Lie algebra $\Lieh$ with the $G$-invariant part of $\Gamma(X,\Dh)$. On the other hand, the center $\Zg$ of $\Ug$ is also naturally contained in $\Gamma(X,\Dh)^G$ and the induced map $\gamma: \Zg \to \Uh$ is the well-known Harish-Chandra homomorphism, which identifies $\Zg$ with the $W$-invariant of $\Uh$, where the action of $W$ on $\Lieh^*$ is the usual one twisted by the half sum $\rho$ of positive roots:
  \[ w . \lambda = w (\lambda - \rho) + \rho. \]
We then have $\Gamma(X,\Dh) \simeq \Ug \otimes_\Zg \Uh$.

Any $\lambda \in \Lieh^*$ determines a homomorphism $\Uh$ to $\complex$. Let $I_\lambda$ be the kernel of the homomorphism $\Uh \to \complex$ determined by $\lambda - \rho$. Then $\gamma^{-1}(I_\lambda)$ is a maximal ideal in $\Zg$ and $\gamma^{-1}(I_\lambda) = \gamma^{-1}(I_\mu)$ if and only if $w \cdot \lambda = \mu$ for some $w \in W$ (where we use the usual $W$-action on $\Lieh$). Thus we can denote the kernel by $J_\chi = \gamma^{-1}(I_\lambda)$ where $\chi = W \cdot \lambda$ is the $W$-orbit of $\lambda$ in $\Lieh^*$. We denote the corresponding infinitesimal character by $\chi_\lambda: \Zg \to \complex$. The sheaf $I_\lambda \Dh$ is a subsheaf of two-sided ideals in $\Dh$, therefore $\Dlambda = \Dh / I_\lambda \Dh$ is a sheaf of associative algebras. We have a canonical algebraic isomorphism between the algebra $\mathcal{U}_\chi = \Ug / J_\chi \Ug = \Ug \otimes_\Zg \complex_{\lambda - \rho}$ and $\Gamma(X,\Dh)$.

Let $\MM(\mathcal{U}_\chi)$ be the abelian category of $\mathcal{U}_\chi$-modules. It is the same as the category of $\Ug$-modules with infinitesimal characters determined by $\chi$. We also have the abelian category $\MM_{qc}(\Dlambda)$ of quasi-coherent $\Dlambda$-modules over $X$.

The Beilinson-Bernstein localization theorem (\cite{BeilinsonBernstein}) says that the global section functor $\Gamma: \MM_{qc}(\Dlambda) \to \MM(\mathcal{U}_\chi)$ is an equivalence of abelian categories if $\lambda$ is dominant and regular. The inverse functor is $\Delta_\lambda: \MM(\mathcal{U}_\chi) \to \MM_{qc}(\Dlambda)$, given by $\Delta_\lambda(V) = \Dlambda \otimes_{\mathcal{U}_\chi} V$, for any $V \in \MM(\mathcal{U}_\chi)$.

Therefore we can localize a $(\Lieg, K)$-module with infinitesimal character $\chi$ to get a $K$-equivariant $\Dlambda$-module on $X$ by choosing a $\lambda \in \Lieh^*$ in the $W$-orbit $\chi$. To get irreducible $(\Lieg,K)$-modules, the standard way is to take a $K$-orbit $Q$ in $X$ together with an irreducible $K$-homogeneous connection $\phi$ on $Q$, which satisfies certain compatibility condition with $\lambda$, and then push forward $\phi$ to a $\Dlambda$-module $\I(Q,\phi)$ on $X$, called the standard Harish-Chandra sheaf attached to $(Q,\phi)$. More precisely, we define the transfer bimodule by
  \begin{equation}\label{eq:transfer}
    \DQ := i^{-1}(\D_\lambda) \otimes_{i^{-1} \OO_{X}} \omega_{Q/X},
  \end{equation}
where $\omega_{Q/X} = \omega_X^{-1} \otimes_{i^{-1} \OO_X} \omega_Q$ is the relative canonical bundle of $Q$ in $X$. Note that the tensor in the definition is with respect to the right $\OO_X$-module structure of $\Dlambda$. The sheaf $\D^{\lambda}_{X \gets Q}$ is a left $i^{-1} \D_\lambda$-module. The restriction of $\LLiek$ to the $K$-orbit $Q$ is still a Lie algebroid, even though $\LLieg$ no longer is. The universal enveloping algebra $\U (\LLiek)$ acts on $\DQ$ from the right, where $\LLiek$ stands for the restriction $i^*\LLiek$ of $\LLiek$ to $Q$.  The same convention applies to other sheaves.

Now let $\phi$ be an irreducible $K$-homogeneous connection on $Q$. Let $x \in Q$ and $T_x(\phi)$ be the geometric fiber of $\phi$ at $x$. Then $T_x(\phi)$ is an irreducible finite dimensional representation of the stabilizer $St_x$ of $x$ in $K$. The connection $\phi$ is completely determined by this representation of $St_x$ on $T_x(\phi)$ since $\phi$ is $K$-homogeneous. Let $\Liec$ be a $\theta$-stable Cartan subalgebra in the Borel subalgebra $\Lieb_x$. The Lie algebra $\Liest_x = \Liek \cap \Lieb_x$ of $St_x$ is the semidirect product of the \emph{toroidal part} $\Liet_Q=\Liek \cap \Liec$ with the nilpotent radical $\Lieu_x = \Liek \cap \Lien_x$ of $\Liest_x$. Let $U_x$ be the unipotent subgroup of $K$ corresponding to $\Lieu_x$. It is unipotent radical of $St_x$. Let $T$ be the Levi factor of $St_x$ with Lie algebra $\Liet_Q$, then $St_x$ is the semidirect product of $T$ with $U_x$. The representation of $St_x$ in $T_x(\phi)$ is trivial on $U_x$, so it can be viewed as a representation of $T$. We say that $\phi$ is \emph{compatible} with $\lambda-\rho$ if the differential of this representation decomposes into a direct sum of a finite number of copies of the one dimensional representation determined by the restriction of $\lambda-\rho$ (specialized to $\Liec$) to $\Liet_Q$ (note that $T$ might be nonabelian in general). 

Now given a $K$-homogeneous connection $\phi$ on $Q$, we can push forward $\phi$ from $Q$ to $X$ to get
  \begin{equation}
    \I(Q,\phi) = i_*(\DQ \otimes_{\U\LLiek} \phi).
  \end{equation}
Under certain condtions, $I(Q,\phi)$ is irreducible and produces an irreducible $(\Lieg,K)$-module, otherwise it contains a unique irreducible subsheaf of $\Dlambda$-modules, denoted by $\LL(Q,\phi)$. With certain assumptions on $\lambda$ and $Q$, the cohomologies of such $\LL(Q,\phi)$ give a geometric classification of the admissible representations of $G_\real$ (\cite{Localization}). The classification of irreducible tempered representations in terms of standard Harish-Chandra sheaves was done in \cite{Chang} and \cite{Mirkovic}.

\subsection{Deformation of $\D$-modules}\label{subsec:deform}
We will describe how to realize $(\Lieg_0,K)$-modules as geometric objects on the flag variety $X$ and how twisted $\D$-modules deform to them. We will not give proofs about validity of the constructions, which will appear elsewhere. Instead, we make the Conjecture \ref{conj:Mackey} and illustrate it by considering the case of $\SLR$ in \S \ref{sec:sl2}.  

First of all, the Lie algebras $\Lieg_0$ and $\Lieg$ fit into a continuous family of Lie algebras $\Lieg_t$, $t \in \complex$, with the fiber at $t=0$ being the Lie algebra $\Lieg_0$ and other fibers $\Lieg_t$, $t \neq 0$, isomorphic to $\Lieg$. A convenient way to describe it is as follows: take the trivial vector bundle $\complex \times \Lieg$ and regard it as a sheaf of $\OO_\complex$-modules over the affine line $\complex$. It is a sheaf of Lie algebras over $\complex$ and its module of global sections is $\Lieg[t]=\Lieg \otimes_\complex \complex[t]$, where $t$ is the coordinate function of $\complex$. Then we can think of $\Lieg_t$ as the subsheaf in $\complex \times \Lieg$ of germs sections which take values in $\Liek \subset \Lieg$ at $0 \in \complex$. In other words, $\Lieg_t = \Liek[t] \oplus t \Lies[t]$. This is a sheaf of Lie subalgebras.

We can extend this construction to the flag variety $X$ and form the trivial vector bundle $\LLieg[t] = X \times \complex \times \Lieg$ and its subsheaf $\LLieg_t=\LLiek [t] \oplus t\LLies[t]$ of germs of local sections which take values in the trivial vector bundle $\LLiek = X \times \Liek$ over $X \times \{ 0 \}$. Both are Lie algebroids over $X \times \complex$ whose anchor maps are induced by the group actions on $X$ and take values in $TX[t]=TX \otimes_\complex \complex[t]$. The sheaf $\LLieg_t$ is a subsheaf of Lie subalgebras of $\LLieg[t]$. Global sections of $\LLieg_t$ is exactly $\Lieg_t$. We form the sheaf of universal enveloping algebras $\U (\LLieg[t])$ generated by $\OO_X$ and $\LLieg[t]$. We also have the subsheaf of subalgebras $\U(\LLieg_t)$ generated by $\OO_X$ and $\LLieg_t$. 

Now assume a $K$-orbit $Q$ in $X$ is given. Denote the embedding by $i: Q \hookrightarrow X$. To simplify the notations, we will still use the same notations $\LLieg$, $\LLieg_t$, $\LLiek$, $\LLieb$, $\LLien$, etc., for their restrictions to $Q$ as $\OO_Q$-sheaves. Also for a sheaf of right $\OO_X$-modules $\mathcal{E}$ and any sheaf $\mathcal{F}$ of left $\OO_Q$-modules, we will write $\mathcal{E} \otimes \mathcal{F}$ for $i^{-1}(\mathcal{E}) \otimes_{i^{-1}\OO_{X}} \mathcal{F}$. We define $\DhQ$ to be the image subsheaf of the composition of maps over $Q$
 \[
    i^{-1} (\U \LLieg_t) \otimes \omega_{Q/X} \hookrightarrow  i^{-1} (\U \LLieg[t]) \otimes \omega_{Q/X} \to i^{-1} (\D_{\Lieh} [t]) \otimes \omega_{Q/X}.
 \]
Just like $\D^{\lambda}_{X \gets Q}$, the sheaf $\DhQ$ is a left $i^{-1}(\U \LLieg_t)$-module and a right $\U (\LLiek[t])$-module. The difference is that we did not quotient out $\Dh$ by any ideal before pulling it back to $Q$. We are going to do it now. The reason for the difference is that, as we will see below, the construction of such quotient depends on $Q$ (or more precisely, the associated $K$-conjugacy classes of Cartan subalgebras) so it cannot be carried out uniformly over the entire flag variety $X$.

The choice of the $K$-orbit $Q$ determines an involution $\theta_Q$ on the abstract Cartan subalgebra $\Lieh$ (\cite{Milicic}), which gives rise to a decomposition $\Lieh = \Liet_Q \oplus \Liea_Q$, where $\Liea_Q$ is the $(-1)$-eigenspace of $\theta_Q$. Now consider over $Q$ the sheaf
   \[  \LLieh_{Q,t} = \LLieg_t \cap \LLieh[t] = \LLiet_Q[t] \oplus t \LLiea_Q[t].  \]
This is a constant vector bundle over $Q$ and so its fiber $\Lieh_{Q,t}$ can be regarded as \emph{the contraction of the universal Cartan subalgebra $\Lieh$}. Moreover, it carries a trivial Lie algebroid structure, even though $\LLieg|_Q$ does not. The sheaf of commutative algebras $\U \LLieh_{Q,t} = S \LLieh_{Q,t}$ acts on $\DhQ$ from the right.

Any given $\lambda \in \Lieh^*$ determines a character 
  \[  \lambda_t : = \lambda_c + (\lambda_{nc} / t) - \rho \]
of $\Lieh_{Q,t}$ (as an element in $\Liet_Q^*[t] \oplus t^{-1}  \Liea^*_Q [t]$), where $\lambda_c = \lambda|_{\Liet_Q}$, $\lambda_{nc} = \lambda|_{\Liea_Q}$.
 We then set 
  \[ \DlQ:=\DhQ \otimes_{\U \LLieh_{Q,t}} (\complex[t])_{\lambda_t}. \]
$\DlQ$ inherits a right $\U \LLiek$-module structure from that of $\DhQ$. We have
an isomorphism
  \begin{equation}
     \DlQ \simeq \D^{\lambda_t}_{X \gets Q} = (i^{-1} \D_{\lambda_t})\otimes \omega_{Q/X}, ~\forall~ t \neq 0,
   \end{equation}
over $Q$, where the right hand side is exactly the transfer bimodule \eqref{eq:transfer} used to define the direct image functor of twisted $\D$-modules.  

Now for any $K$-homogeneous connection $\phi$ on $Q$ which is compatible with $(\lambda - \rho)|_{\Liet_Q}$, we set 
\begin{equation}\label{eq:pushfwd_Dt}
  \I(\lambda_t,Q,\phi) := i_* \left(\D^{\lambda_t}_Q \otimes_{\U \LLiek} \phi \right) = i_* \D^{\lambda_t}_Q \otimes_{\U \LLiek} i_* \phi. 
\end{equation}
It is a left $\U \LLieg_t$-module. The specialization of $\I(\lambda_t,Q,\phi)$ to $t = 1$ is a $\D_\lambda$-module over $X$, therefore its global sections form a $(\Lieg, K)$-module with infinitesimal character $\chi_\lambda$. When $\lambda$ is regular and dominant, it has a unique irreducible submodule, denoted by $M(\lambda,Q,\phi)$. The global sections of the specialization $\I_0(\lambda,Q,\phi) := \I(\lambda_t,Q,\phi) |_{t = 0}$ give a $(\Lieg_0,K)$-module, denoted by $\widetilde{M}_0(\lambda,Q,\phi)$. Since $S(\Lies)$ is a subalgebra of $\U\Lieg_0$, the module $\widetilde{M}_0(\lambda,Q,\phi)$ can be regarded as a $K$-equivariant coherent sheaf $\E=\E(\lambda,Q,\phi)$ over $\Lies^*$. For generic $\lambda$, the support $\supp(\E)$ of $\E$ in $\Lies^*$ is a single $K$-orbit, over which $\E$ is a vector bundle, and $\widetilde{M}_0(\lambda,Q,\phi)$ is irreducible as a $(\Lieg_0,K)$-module. 

However, special care needs to be taken when $\lambda_{nc}=\lambda|_{\Liea_Q}$ is not regular. One reason is that Beilinson-Bernstein localization theorem does not behave well for singular values of $\lambda$. Another main reason is that, as we will see in the example of $SL(2,\real)$, the sheaf $\E$ over $\Lies^*$ might not be coherent and $\widetilde{M}_0(\lambda,Q,\phi)$ might not be finitely generated as a $U \Lieg_0$-module. So we need to adjust the definition \label{eq:pushfwd_Dt} of $\I(\lambda_t,Q,\phi)$. For instance, when $\lambda=0$ we expect the adjusted construction gives a coherent sheaf supported on $\mathcal{N} \cap \Lies^*$, where $\mathcal{N}$ is the nilpotent cone in $\Lieg^*$. This indicates we need to apply the usual Rees module construction in this situation so that $\supp(\E)$ is the associated variety of the representation.  We expect that a mixture of our previous definition \eqref{eq:pushfwd_Dt} of $\widetilde{M}_0(\lambda,Q,\phi)$ and the Rees module construction will lead to a general definition of $\I(\lambda_t,Q,\phi)$ and hence of $\widetilde{M}_0(\lambda,Q,\phi)$. The new version of $\I(\lambda_t,Q,\phi)$ would be a subsheaf of the one in \eqref{eq:pushfwd_Dt}. We will check this in the case of $\SLR$.

Last but not least, when the degenerate situation happens, the module $\widetilde{M}_0(\lambda,Q,\phi)$ is expected to be reducible, since $\supp(\E)$ might consist of more than one $K$-orbit. We expect it contains a unique minimal closed $K$-orbit $O_{min}$. The restriction of $\widetilde{M}_0(\lambda,Q,\phi)$ to $O_{min}$ is then an irreducible  $(\Lieg_0,K)$-module, denoted as $M_0(\lambda,Q,\phi)$. Hence $M_0(\lambda,Q,\phi)$ is the minimal quotient of $\widetilde{M}_0(\lambda,Q,\phi)$. With the classification of tempered representations in terms of Harish-Chandra sheaves in mind (\cite{Chang}, \cite{Mirkovic}), we now state our conjecture.

\begin{conjecture}\label{conj:Mackey}
  For any irreducible tempered $(\Lieg,K)$-module realized as (global sections of) a standard Harish-Chandra module $\I(Q, \phi)$ associated to the triple $(\lambda, Q, \phi)$, we can define a sheaf $\I(\lambda_t,Q,\phi)$ of coherent $(\Lieg_t,K)$-modules over the flag variety, such that its specialization to $t=1$ is $\I(Q,\phi)$ and (the global sections of) its specialization to $t=0$ is a coherent $(\Lieg_0,K)$-module $\widetilde{M}_0(\lambda,Q,\phi)$. Each $\widetilde{M}_0(\lambda,Q,\phi)$ has a unique irreducible quotient $M_0(\lambda,Q,\phi)$ such that the correspondence $M(\lambda,Q,\phi) \longleftrightarrow M_0(\lambda,Q,\phi)$ realizes Afgoustidis' Mackey-Higson bijection between the tempered duals of $G_\real$ and $G_{0,\real}$. Moreover, this bijection can be extended to all admissible duals of $G_\real$ and $G_{0,\real}$. 
\end{conjecture}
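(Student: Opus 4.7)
The plan is to build the family $\I(\lambda_t, Q, \phi)$ in two stages: first the naive pushforward \eqref{eq:pushfwd_Dt}, valid when $\lambda_{nc}=\lambda|_{\Liea_Q}$ is regular, and then a Rees-corrected subsheaf that replaces it when $\lambda_{nc}$ degenerates. The overall logic is that once the family is known to be $\complex[t]$-flat with prescribed generic fiber $\I(Q,\phi)$, the $t=1$ claim is tautological, the $t=0$ fiber is determined, and the remaining work is to identify that fiber with an Afgoustidis-Mackey induced representation.

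First I would make precise the Rees-type adjustment hinted at in \S\ref{subsec:deform}. The natural degree filtration on $\DhQ$ coming from $\LLies[t]$ gives a Rees module, and intersecting it with the naive family \eqref{eq:pushfwd_Dt} produces a canonical $\complex[t]$-torsion-free subfamily. This should agree with \eqref{eq:pushfwd_Dt} when $\lambda_{nc}$ is regular, and in the degenerate case produce a $K$-equivariant coherent sheaf $\E$ on $\Lies^*$ whose support, restricted to $\mathcal{N} \cap \Lies^*$, is the associated variety of $M(\lambda,Q,\phi)$. The $\SLR$ computations of \S\ref{sec:sl2} would serve as the template ensuring this definition reproduces the expected behavior.

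Next I would identify $\widetilde{M}_0(\lambda,Q,\phi) = \Gamma(X, \I_0(\lambda,Q,\phi))$ with a Mackey-induced representation. Under the moment-type map $\supp(\E) \to \Lies^*$, the sheaf $\E$ becomes (after restricting to the minimal closed orbit $O_{min}$) a $K$-equivariant vector bundle on a single orbit $K\cdot\chi$, and hence by the Mackey machine is determined by a representation $\mu$ of the stabilizer $K_{\real,\chi}$. The key computation is to verify that $\mu$ matches Afgoustidis' prescription: one must track how the compatibility condition on $\phi$ with $(\lambda-\rho)|_{\Liet_Q}$, together with the contracted Cartan $\Lieh_{Q,t}$ specialized at $t=0$, produces exactly the irreducible $K_{\real,\chi}$-representation that Vogan's bijection assigns to $\phi$ via the minimal $K$-type of $V_{M_\chi}(\mu)$.

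The main obstacle will be the singular case: both pinning down the Rees-type truncation uniformly across all Mackey parameters, and showing that the resulting $\widetilde{M}_0(\lambda,Q,\phi)$ admits a unique minimal irreducible quotient $M_0(\lambda,Q,\phi)$ supported on $O_{min}$ and matching Afgoustidis. A secondary difficulty is the extension to admissible duals, where the target Mackey bijection does not yet exist. Here one must take the conjectural construction as defining the bijection, and separately verify, using the $\D$-module classification of \cite{Localization} and Mackey's classification for semidirect products, that the resulting list of $(\Lieg_0,K)$-modules is complete and non-redundant. I would attempt this inductively on the length of $\LL(Q,\phi)$ in $\I(Q,\phi)$, exploiting the right exactness of specialization at $t=0$ to propagate the bijection from standard Harish-Chandra sheaves to their irreducible subquotients.
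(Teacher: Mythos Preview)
The statement you are attempting to prove is labeled a \emph{Conjecture} in the paper, and the paper does not supply a proof. What the paper does is verify the conjecture in the single example $G_\real=\SLR$ by explicit, case-by-case computation in \S\ref{sec:sl2}: discrete series at the closed orbits, principal series at the open orbit with $\lambda$ regular, and then a separate treatment of the singular case $\lambda=0$. There is no general argument to compare your proposal against.

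That said, your outline does capture the heuristics the paper uses to motivate the conjecture, with one concrete discrepancy worth flagging. You propose to obtain the corrected family by intersecting the naive pushforward with a Rees module coming from a degree filtration on $\DhQ$. The paper's actual recipe in \S\ref{subsec:new_family} is different: it takes the $\U\LLieg_t$-submodule of the naive $\I(\lambda_t,Q,\phi)$ generated by the $\OO_X$-span $\M$ of the minimal $K$-types. In the $\SLR$ computations this submodule \emph{happens} to coincide with a Rees module when $\lambda$ is singular (equations \eqref{eq:prin_sing_even}--\eqref{eq:prin_sing}), but the definition is via $\M$, not via a filtration intrinsic to $\DhQ$. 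Your proposed Rees intersection is not obviously the same object, and the paper gives no indication that such an intersection would reproduce the minimal-$K$-type construction in general.

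Beyond that, your proposal is a reasonable research strategy for attacking an open problem, but several steps are genuine obstacles rather than routine verifications: the existence of a unique minimal closed $K$-orbit $O_{min}$ in $\supp(\E)$ is asserted by the paper only as an expectation; the identification of the fiber over $O_{min}$ with the correct $K_{\real,\chi}$-representation via Vogan's bijection is not carried out anywhere in the paper outside the rank-one case; and the extension to admissible duals is, as you note, a definition in search of a theorem. None of this is a flaw in your reasoning so much as a confirmation that the statement is, as labeled, conjectural.
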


The possibly reducible $(\Lieg_0,K)$-module $\widetilde{M}_0(\lambda,Q,\phi)$ remembers more information about the original $(\Lieg,K)$ than the irreducible $(\Lieg_0,K)$-module $M(\lambda,Q,\phi)$. The process of going from $\widetilde{M}_0(\lambda,Q,\phi)$ to the corresponding $(\Lieg,K)$ can be interpreted as deformation quantization of $\widetilde{M}_0(\lambda,Q,\phi)$ as a $K$-equivariant sheaf over its support in $\Lies^*$ as a Lagrangian subvariety in certain coadjoint orbit of $G$. This viewpoint will be explored elsewhere.

\section{The example of $SL(2, \real)$}\label{sec:sl2}

We now examine Conjecture \ref{conj:Mackey} in the example of $\SLR$. Since $\SLR$ is conjugate to $\SU$ in its complexification $\SLC$ and the formulas become simpler for $\SU$, we will consider the Lie groups
\begin{equation}\label{eq:groups}
\begin{gathered}
  G_\real = \SU = \left\{  
    \begin{pmatrix}
      a  &  b \\
      \bar{b}  & \bar{a}
    \end{pmatrix}
    \bigg| ~ a, b \in \complex, |a|^2 - |b|^2 = 1
   \right\}, \\
  G = \SLC = \left\{
    \begin{pmatrix}
      a & b \\
      c & d
    \end{pmatrix}
    \bigg| ~ a, b, c, d \in \complex, ad - bc = 1
  \right\}, \\   
  K_\real = \left\{
    \begin{pmatrix}
      a & 0 \\
      0 & a^{-1}
    \end{pmatrix} 
    \bigg| ~ a \in \complex, |a|=1 
  \right\}, 
  \quad
  K = \left\{
    \begin{pmatrix}
      a & 0 \\
      0 & a^{-1}
    \end{pmatrix}
    \bigg| ~ a \in \complex^*
  \right\}.
\end{gathered}
\end{equation}
The complexified Lie algebra of $G_\real$ is
\begin{equation}  
  \Lieg = \slc = \left\{
    \begin{pmatrix}
      p & r \\
      s & -p
    \end{pmatrix}
    \bigg| ~ p, r, s \in \complex
  \right\}.
\end{equation}
The Cartan involution $\theta$ on $\Lieg$ is given by $\theta(T) = J T J$, $T \in \Lieg$, where
\[ 
  J=
    \begin{pmatrix}
      -1 & 0 \\
      0 & 1
    \end{pmatrix}.
  \]
The Lie algebra $\Lieg$ has a standard basis
\begin{equation}
  E = \begin{pmatrix}
      0 & 1 \\
      0 & 0
    \end{pmatrix}, \quad
  F = \begin{pmatrix}
      0 & 0 \\
      1 & 0
    \end{pmatrix},   \quad
  H = \begin{pmatrix}
      1 & 0 \\
      0 & -1
    \end{pmatrix}
\end{equation}
satisfying the standard commutation relations
\begin{equation}\label{eq:comm}
  [ H, E ] = 2 E, \quad [H, F] = -2 F, \quad [E, F] = H.
\end{equation}
Moreover, $H$ spans the Lie algebra $\Liek$ of $K$ and $\{ E, F \}$ spans $\Lies$. We set
  \[  E_t = tE, \quad F_t = tF, \quad H_t = H \in \Lieg_t. \]

The flag variety $X$ of $G=\SLC$ is the variety of full flags of $\complex^2$, so $X$ can be identified with $\PP$ with homogeneous coordinates $[z_0 : z_1]$. We set $\infty=[1:0]$, so we can choose coordinate $z$ on $U_0 = \PP-\{ \infty \}$ by $z([z_0:1]) = z_0$ so that $U_0$ is identified with $\complex$. We also set $0=[0:1]$ and $U_1 = \PP - \{0\}$ and choose the coordinate $\zeta$ on it by setting $\zeta([1:z_1])=z_1$. The $K$-action decomposes $X$ into three orbits: $0$, $\infty$ and $Q=\PP-\{ 0, \infty \} \simeq \complex^*$. The root system is $\Sigma=\{ \alpha, -\alpha \}$ where $\alpha$ denotes the positive root. Set $\rho = \frac{1}{2} \alpha$ and $l = \alpha\check{}~(\lambda)$, where $\alpha\check{} \in \Lieh$ is the dual root of $\alpha$, so that $\alpha\check{}~(\rho)=1$ and $\lambda = l \rho$. At $\infty$, the corresponding Borel subgroup and Borel subalgebra are
\[  B_\infty 
    = \left\{ \begin{pmatrix}
      a & b \\
      0 & d
    \end{pmatrix} \right\},
    \quad
    \Lieb_\infty
    = \left\{ \begin{pmatrix}
      p & r \\
      0 & -p
    \end{pmatrix} \right\}
    = \complex H \oplus \complex E,
 \]
respectively. Similarly, the Borel subgroup and Borel subalgebra associated to $0$ are
\[  B_0 
    = \left\{ \begin{pmatrix}
      a & 0 \\
      c & d
    \end{pmatrix} \right\},
    \quad
    \Lieb_0
    = \left\{ \begin{pmatrix}
      p & 0 \\
      s & -p
    \end{pmatrix} \right\}
    = \complex H \oplus \complex F,
 \]
 respectively. The Lie subalgebra $\Liek=\complex H$ forms a $\theta$-stable Cartan subalgebra of $\Lieg$.  If we specialize $\Liek$ at $0$, the vector $H$ corresponds to the dual root $\alpha\check{}$, while $H$ corresponds to $-\alpha\check{}$ when specialized at $\infty$, .

The action of $G$ on $X$ is given by
\begin{equation}
  g = \begin{pmatrix}
      a & b \\
      c & d
    \end{pmatrix}
    : \quad [ z_0 : z_1 ]  \mapsto [a z_0 + b z_1 : c z_0 + d z_1],
\end{equation}
hence the induced infinitesimal acton $\tau: \LLieg \to TX$ under the $z$-coordinate and $\zeta$-coordinate is given by
\begin{equation}\label{eq:action}
E \mapsto - \dz = \zeta^2 \dzz, \quad  F \mapsto z^2 \dz = - \dzz, \quad H \mapsto -2 z \dz = 2 \zeta \dzz. 
\end{equation}
For example, $E$ acts on any function $f(z)$ by taking derivative of $f(\exp(-tE)z) = f(z-t)$ with respect to $t$, which gives $(Ef)(z) = -z \dz f (z)$. $H$ correspsonds to $-2z \dz$ since if we identify $K$ with $\complex^*$ by taking the first entry in the diagonal matrix expression in \eqref{eq:groups}, any $a \in K \simeq \complex^*$ acts on the chart $(U_0, z)$ by multiplication by $a^2$.

Over $U_0$ and $U_1$ we have two trivializations of $\D_\lambda$ such that, under $z$-coordinate and $\zeta$-coordinate, the map $\LLieg \to \D_\lambda$ is given by
\[ E \mapsto - \dz = \zeta^2 \dzz - (l-1) \zeta, \quad   F \mapsto z^2 \dz - (l-1) z = -\dzz, \quad H \mapsto - 2 z \dz + (l-1) = 2 \zeta \dzz - (l-1). \]

\subsection{Discrete series and limits of discrete series representations}\label{subsec:discrete} 
The global sections of the standard Harish-Chandra sheaves at the two closed $K$-orbits $\{0\}$ and $\{\infty\}$ represent the discrete series representations and limits of discrete series. Take the orbit $\{0\}$ for example. In this case $\Liek$ is the unique maximally toroidal $\theta$-stable Cartan subalgebra and the full $K$-group is the stabilizer of the $K$-action. Moreover, the nilpotent ideal $\Lien^-_0$ of the corresponding Borel subalgebra $\Lieb_0$ is spanned by $F$, while $\Lien^+_0$ is spanned by $E$, both contained in $\Lies$. Hence for $\lambda= n \rho$ with $n \in \ZZ$, $n \geq 0$, we have $\lambda_t = \lambda - \rho$, which does not depend on $t$. The choice of the integrable connection over $\{0\}$ is unique, which is $\complex_{\lambda - \rho}$. Since $\omega_X = \OO(-2\rho) = \OO_{\PP}(-2)$, the sheaf $\I(\lambda_t,Q,\phi)$ in \eqref{eq:pushfwd_Dt} is given by the vector space
 \[ \D^+_{n,t} = \D^{\lambda_t}_{ \{0\} } = \U ( t \Lien^{+}_0 [t] ) \otimes_\complex \omega^{-1}_{X}|_{\{0\}} \otimes_\complex \complex_{\lambda - \rho} \simeq \U ( t \Lien^{+}_0 [t] ) \otimes_\complex  \complex_{\lambda - \rho + 2 \rho} \simeq \complex[E_t] \delta_{n} \otimes_\complex \complex[t]  \]
supported at $0$, where $\delta_n=1 \in \complex_{\lambda+\rho}$. The $\Lieg_t$-module structure on $D^+_{n,t}$ satisfies the relations
\[ H_t \delta_n = H \delta_n = [\alpha\check{}~(\lambda+\rho)]\delta_n = (n+1)\delta_n, \quad F_t \delta_n = 0, \quad [E_t, F_t] = 2 t^2 H_t. \]
Therefore the $K$-weight of $\delta_n$ is $n+1$ and $\D^+_n = \D^+_{n,t}/(t-1)\D^+_{n,t} = \complex[E] \delta_n$ is the Harish-Chandra module of the holomorphic disrete series representation $D^+_n$ with Harish-Chandra parameter $n$ and minimal $K$-type of weight $n+1$ (or the limit of discrete series $D^+_0$ when $n=0$). In this case, $\D^+_{n,t}$ concides with the Rees module of the $\D$-module $\D^+_n$ with respect to its natural $K$-invariant filtration
  \[ F_p \D^+_{n} = \spn_\complex \{ E^k \delta_n | 0 \leq k \leq p  \}, ~ \forall~ p \geq 0, \quad \text{and} \quad F_p \D^+_{n} = 0, ~\forall~ p < 0. \]
Specialize $\D^+_{n,t}$ to $t=0$, we get the $(\U\Lieg_0, K)$-module $\D^+_{n,0} = \complex[E] \delta_n$ such that its $\U \Lieg_0$-module structure is given by
  \[ H \delta_n = (n+1)\delta_n, \quad F \delta_n = 0.  \]
The support of $\D^+_{n,t}$ as an $S \Lies$-module is the subvariety $O^+$ in $\Lies^*$ is determined by the equation $F=0$, which lies in the nilpotent cone $\mathcal{N}$ in $\Lieg^*$ and is isomorphic to the complex line $\complex$. The module $\D^+_{n,0}$ consists of the global sections of the $K$-equivariant line bundle over $O^+$, of which $K=\complex^*$ acts on the fiber at the origin by weight $(n+1)$. $\D^+_{n,0}$ is not an irreducible $(\U \Lieg_0,K)$-module, since $O^+$ consists of two $K$-orbits: the origin as the unique closed orbit and its complement. Therefore $\D^+_{n,0}$ has a unique irreducible quotient corresponding to the restriction of the line bundle to the origin, which gives the representation $\complex_{n+1}$, which is $\complex$ with the weight $n+1$ action of $K$ and the trivial $S \Lies$-module structure. Hence the Mackey bijection relates (the Harish-Chandra module of) the holomorphic discrete series representation of $G_\real$ with minimal $K$-type of weight $n+1$ with its minimal $K$-type $\complex_{n+1}$ as a one dimensional representation of the motion group $G_{\real,0}$.

Apply a similar construction to the orbit $\infty$ and $\lambda= n \rho$ ($n \in \ZZ$, $n \geq 0$), we obtain the deformation of anti-holomorphic discrete series representation $D^-_{n}$ for $n \geq 1$ (and the limit of discrete series representation $D^-_{0}$) to a line bundle supported on the $K$-subvariety $O^- \subset \Lies^*$ cut out by the equation $E=0$. The restriction of the line bundle to the origin is the one dimensional motion group representation $\complex_{-n-1}$ under the Higson-Mackey bijection.

\subsection{Principal series representations}\label{subsec:principal}
Now we study the principal series representations associated to the unique open $K$-orbit $Q$. The stabilizer in $K$ of any point in $Q$ is $\{ \pm 1 \}$. It is useful to work with a second trivialization of $\D_\lambda$ restricted on the open $K$-orbit $Q \simeq \complex^*$ by restricting the original $z$-trivialization on $U_0$ to $Q \simeq \complex^*$ and twisting it by the automorphism of $\D_{\complex^*}$ induced by
  \begin{equation}\label{eq:trivialization}
    \dz \mapsto \dz + \frac{l-1}{2z} = z^{-\frac{l-1}{2}} \dz z^{\frac{l-1}{2}}. 
  \end{equation}
Now the map $\LLieg \to \D_\lambda |_{Q} \simeq \D_{\complex^*}$ is given by
  \begin{equation}\label{eq:second_trival}
    E \mapsto -\dz - \frac{l-1}{2z}, \quad F \mapsto z^2 \dz - \frac{l-1}{2} z, \quad H \mapsto  -2z \dz, 
  \end{equation}
such that $H$ still corresponds to the original differential operator generated by the $K$-action on $Q \simeq \complex^*$ as in \eqref{eq:action}. In other words, The trivialization is via the canonical isomorphism $\U \LLiek |_Q \simeq \D_\lambda|_Q$ induced by the composition $\U \LLiek \hookrightarrow \D_\Lieh \twoheadrightarrow \D_\lambda$ (since the stabilizer of the $K$-action on $Q$ is discrete in this case), so that the expression for $H$ does not change when $\lambda$ varies.

Now we replace $\lambda$ by $\lambda/t$, or equivalently, $l$ by $l/t$, to get the family of sheaves $\D_{\lambda/t}$ so that $(E,F,H)$ satisfy \eqref{eq:second_trival} with $l$ replaced by $l/t$ for each $t \in \complex^*$. We then rescale the resulting differential operators for $E$ and $F$ by multiplying by $t$ and get 
  \begin{equation}\label{eq:efh_t}
     E_t = t \left(-\dz + \frac{1}{2z} \right) - \frac{l}{2z}, \quad F_t = t \left( z^2 \dz + \frac{1}{2}z \right) - \frac{l}{2} z, \quad H_t = H = -2z \dz, 
   \end{equation}
for any $t \in \complex$. $(E_t, F_t, H_t)$ still satisfies the commutation relations
\begin{equation}
  [ H_t, E_t ] = 2 E_t, \quad [H_t, F_t] = -2 F_t, \quad [E_t, F_t] = t^2 H_t. 
\end{equation}
Moreover, they satisfy the equation
\begin{equation}\label{eq:casimir_t}
  \frac{t^2}{4}H^2_t + \frac{1}{2}(E_t F_t + F_t E_t) = \frac{l^2-t^2}{4},
\end{equation}
of which the left hand side becomes the standard Casimir operator of $\slc$ when $t=1$. Hence when $t=0$, 
\begin{equation}\label{eq:efh0}
  E_0 = -\frac{l}{2z}, \quad F_0 =  -\frac{l}{2} z, \quad H_0 = -2z \dz,
\end{equation}
and $(E_0, F_0, H_0)$ forms a basis of the Lie algebra $\Lieg_0$ of the motion group. The kernel of the homomorphism $S(\Lies) \to \Gamma(\complex^*, \D|_{\complex^*}) \simeq \Gamma(Q, \D^{\lambda_t}_Q)$ ($\omega_{Q/X}$ is trivial in this case) is generated by the relation 
  \[  E_0 F_0 = \frac{1}{2}(E_0 F_0 + F_0 E_0)  =  \frac{-l}{2z} \cdot \frac{-l}{2} z = \frac{l^2}{4}, \] 
which can also be deduced from \eqref{eq:casimir_t} by setting $t=0$. 

Now we assume $\lambda$ is regular (and not necessarily dominant), i.e., $l \neq 0$. Then the $K$-orbit $C_\lambda$ in $\Lies^*$ corresponding to principal series representations with infinitesimal character $\lambda$ is the intersection of $\Lies^*$ with the regular semisimple coadjoint orbit in $\Lieg^*$ determined by the equation
  \begin{equation}\label{eq:orbit}
     \frac{1}{4} H^2 + EF = \frac{l^2}{4}
  \end{equation} 
 in $S \Lieg$, where $E, F, H \in \Lieg$ are considered as linear functions on $\Lieg^*$ (since $\Lies^* \subset \Lieg^*$ is given by the equation $H=0$). Note that the calculation above does not depend on which $K$-homogenous connection we choose on $Q$. In other words, for a fixed $\lambda$ we alway get the same $K$-orbit in $\Lies^*$. In fact, the image of the homomorphism $S \Lies \to \Gamma(Q, \D^{\lambda_t}_Q|_{t=0})$ lies in $\Gamma(Q, \OO_Q)$ by \eqref{eq:efh0}, which gives an isomorphism 
   \[ \Phi_\lambda: Q \to C_\lambda,\] 
of affine varieties.
 
Now we consider $K$-homogeneous connections over $Q$. Since the isotropy subgroup in $K$ for any point of $Q$ is $\{ \pm 1 \}$. Denote by $\phi_0$ the irreducible $K$-homogeneous connection corresponding to the representation $x \mapsto x^k$ of the group $\{ \pm 1 \}$, $k=0,1$. Under the new trivialization \eqref{eq:trivialization}, the space of global sections of $\phi_k$ on $Q \simeq \complex^*$ is spanned by the formal sections $z^{n+\frac{k}{2}}$, $n \in \ZZ$. Since $Q$ is open, the natural map $\U\LLiek[t] \to \D^{\lambda_t}_Q$ is surjective. Let us first follow \eqref{eq:pushfwd_Dt} and pushforward $\phi_k$ to $X$ to get the $\U\LLieg_t$-module 
  \[  \I(\lambda_t, Q, \phi_k) = i_* \phi_k [t].\] 
In this case the family of twisted $\D$-modules is constant as $\OO_X$-sheaves. The only thing that is changing is the action of the differential operators. Specialize to $t=0$, we get the $\U\LLieg_0$-module $\I_0(\lambda, Q, \phi)=i_*\phi_k$, on which $\Lieg_0$ acts by multiplying by functions in \eqref{eq:efh0}. Its space of global sections $\widetilde{M}_0(\lambda, Q, \phi)=M_0(\lambda,Q,\phi)$ gives an irreducible $(\Lieg_0,K)$-module. We have already worked out the corresponding $K$-orbit $C_\lambda$ in $\Lies^*$ in \eqref{eq:orbit}. The line bundle over $C_\lambda$ is just the pullback of $\phi_k$ via the isomorphism $C_\lambda \simeq Q$. 

Specialization of $\I(\lambda_t, Q, \phi_k)$ to $t=1$ gives the standard Harish-Chandra sheaf $\Prin_{\lambda,k} := \I(\lambda, Q, \phi_k)$ which corresponds to infinite dimensional principal series representations. We get the even principal series representations and the odd principal series representations when $k=0$ and $k=1$ respectively. However, $\I(\lambda, Q, \phi_k)$ might be reducible certain values of $\lambda$ and $k$. It is reducible if and only if $l+k = \alpha\check~(\lambda) + k$ is an odd integer and in this case it contains $\OO(\lambda-\rho)$ as the unique irreducible $\D_\lambda$-module over $X$ (\cite{Milicic}), where $\OO(\lambda-\rho) = \OO_{\PP}(l-1)$ is the sheaf of meromorphic functions over $X$ with pole of order less or equal than $(l-1)$ at $0$ ($l-1 \geq 0$ since $\lambda$ is regular, integral and dominant). By the Borel-Weil-Bott theorem, the global sections of $\OO(\lambda-\rho)$ form the (nonunitary) irreducible finite dimensional representation of $G_\real$ of highest weight $\lambda-\rho$, which is $M(\lambda,Q,\phi_k)$ by our notation. So in this case, the infinite dimensional irreducible $(\Lieg_0,K)$-module $M_0(\lambda,Q,\phi_k)$ corresponds to the finite dimensional $M(\lambda,Q,\phi_k)$ under the Mackey-Higson bijection.

\subsection{Principal series representations with singular character}
Now consider the case when $\lambda$ is singular, i.e., $\lambda=0$. If we still follow the original definition \eqref{eq:pushfwd_Dt}, the operators $E_0$ and $F_0$ will act as the zero operator on $\I_0(\lambda, Q, \phi)$ by \eqref{eq:efh0} and the correpsonding sheaf $\mathcal{L}$ on $\Lies^*$ will be supported at the origin. But the representation space is infinite dimensional so the sheaf is not coherent. The $(\Lieg_0,K)$-module is highly reducible and each of its $K$-component is an irreducible quotient, so there is no canonical way to pick out one. Hence we need to treat this case differently.

In the case of the even principal series representation with $\lambda=0$, $\phi_0$ is the trivial line bundle over $K$ and $\Prin_{0,0}=i_* \phi_0$ is irreducible. We take the sheaf of $\OO_X$-modules on $X$ spanned by the minimal $K$-type of $\Prin_{0,0}$, i.e.,
  \[ \M = \spn_{\OO_X} \{z^0 = 1 \} = \OO_X \subset i_* \phi_0. \]
Then the sheaf of $\U \LLieg_t$-submodules in $i_* \phi_0[t]$ generated by $\M$ coincides with the usual Rees-module of the $\D_0$-module $\Prin_{0,0}$. We still denote it by $ \I(\lambda_t,Q,\phi_0)$. Namely, choose the $K$-invariant filtration of $\Prin_{0,0}$ to be
\begin{equation}\label{eq:filtr_prin}
     F_p \Prin_{0,0}  = \spn_{\OO_X} \left\{ z^{n} | -p \leq n \leq p  \right\}, ~ \forall~ p \geq 0, \quad \text{and} \quad F_p \Prin_{0,0} = 0, ~\forall~ p < 0,
  \end{equation}
so that $F_0 \Prin_{0,0} = \M$. Then
\begin{equation}\label{eq:prin_sing_even}
   \I(\lambda_t,Q,\phi_0) = \sum_{p \geq 0} t^p F_p \Prin_{0,0}[t] \subset i_* \phi_0 [t].
 \end{equation}
The restriction to $t=0$ is then the associated graded sheaf of $\Prin_{0,0}$ with respect to the filtration $F_p \Prin_{0,0}$,
\begin{equation}\label{eq:prin_sing}
   \I_0(0,Q,\phi_0) \simeq \bigoplus_{p \geq 0} t^p (F_p \Prin_{0,0} / F_{p-1} \Prin_{0,0}).
 \end{equation}
Since all $F_p \Prin_{0,0}$ are spanned by subsets of the natural basis $\{ z^n| n \in \ZZ \}$ of $M(\lambda,Q, \phi_0)=\Gamma(X,\Prin_{0,0})$ which are $K$-components, we have the $K$-equivariant identification 
  \[  \widetilde{M}_0(\lambda,Q,\phi_0) = \Gamma(X,\I_0(0,Q,\phi_0)) \simeq \spn_\complex \{ t^{|n|}z^n ~|~ n \in \ZZ \}, \]
such that the actions of $E_0$ and $F_0$ are given by
\begin{equation}
\begin{aligned}
  E_0 (t^{n} z^{-n}) = \left( n+\frac{1}{2} \right) t^{n+1} z^{-n-1}, && \forall~ n \geq 0, \\ E_0(t^n z^n) = 0, && \forall ~ n > 0,  \\
   F_0 (t^{n} z^{n}) = \left( n+\frac{1}{2} \right) t^{n+1}  z^{n+1}, && \forall~ n \geq 0, \\ F_0(t^{n} z^{-n}) = 0, && \forall ~ n > 0.
\end{aligned}
\end{equation}
One can see this by plugging $l=0$ into \eqref{eq:efh_t}. The corresponding coherent sheaf $\LL$ over $\Lies^*$ is locally of free of rank $1$. Its support is the intersection of the nilpotent cone $\N$ in $\Lieg^*$ with $\Lies^*$, which is the union of the two complex lines determined by the equations $E=0$ and $F=0$ in $\Lies^*$, respectively. The unique quotient of $\widetilde{M}_0(\lambda,Q,\phi_0)$ is the same as the geometric fiber of $\LL$ at $0 \in \Lies^*$, so we have
  \[  M_0(\lambda,Q, \phi_0) = \complex_0, \]
i.e., a copy of $\complex$ with the trivial $K$-action and the trivial $S(\Lies)$-action. This coincides with the prediction of Mackey-Higson bijection, or more specifically, Vogan's bijection, which relates the even principal series representation of $G_0$ with $\lambda=0$ to one-dimensional $G_0$-representation $\complex_0$.

The odd principal series representation $\Prin_{0,1}$ is reducible. Its global sections decompose into a direct sum of two limit of discrete series representations, which have already been discussed above. There are two minimal $K$-types, $z^{\pm \frac{1}{2}}$, each of which belongs to one of the two limit of discrete series representations. Similar to the even principal series, the subsheaf of $\U \LLieg_t$-submodules $ \I(\lambda_t,Q,\phi_1)$ in $i_* \phi_1[t]$ generated by $\{ z^{\pm \frac{1}{2}}\}$ coincides with the usual Rees module of the $\D_0$-module $\Prin_{0,1}$ of which the filtration is generated by $\{ z^{\pm \frac{1}{2}}\}$. Furthermore, this Rees-module decomposes as $(\Lieg_t, K)$-module into a direct sum of the Rees modules  $\D^+_n$ of the two limit of discrete series representations discussed in \ref{subsec:discrete}. Hence the decomposition also holds for the $(\Lieg_0,K)$-module $\I_0(\lambda,Q,\phi_1)$ when specialized at $t=0$. This a family version of the usual Schimd identities (see, e.g., \cite{Localization}).

\subsection{A new deformation generated by minimal K-types}\label{subsec:new_family}

In fact, the special treatment of principal series representations with $\lambda=0$ and our original definition of the deformation can be unified to a new definition of $\I(\lambda_t,Q,\phi)$. Let $\M$ be the sheaf of $\OO_X$-submodules in $i_* \phi_k$ generated by the minimal $K$-types of $\Gamma(X, \Prin_{0,k})$. That is,
  \[ \M = \M(Q,\phi) = \left\{ \begin{array}{ll}
                                     \spn_{\OO_X} \{z^0 = 1 \} = \OO_X \subset i_* \phi_0, & k=0, \\
                                     \\
                                     \spn_{\OO_X} \{z^{\pm \frac{1}{2}} \} = \OO(\rho) \subset i_* \phi_1, & k=1.
                                    \end{array}
                            \right.\] 
Define $\I(\lambda_t, Q ,\phi_k)$ to be the sheaf of $\U\LLieg_t$-submodule in the original $\I(\lambda_t,Q,\phi_k)$ in \eqref{eq:pushfwd_Dt} generated by $\M$. One can show that
\begin{equation}\label{eq:newI}
  \I(\lambda_t, Q ,\phi_k) = (i_* \DlQ) \otimes_{\U\LLiek} \M,
\end{equation}        
where the tensor product is over $X$. We still write $\widetilde{M}_0(\lambda,Q,\phi)$ as the global sections of $ \I(\lambda_t, Q ,\phi_k)$ and  

We compare the new defined $\I(\lambda_t, Q ,\phi_k)$ with the original one. 

\begin{proposition}\label{prop:newI}
  In the case of principal series representations $\Prin_{\lambda,k}$ such that $\lambda=\l \rho$ and $l$ has nonzero imaginary part, we have  
  \[  \I(\lambda_t, Q ,\phi_k)|_{t=x} = i_* \phi_k, \]
  for any $x \in \real \subset \complex$. That is, the new version of $\I_\real(\lambda_t, Q ,\phi_k)$ coincides with the original one when restricted to a family over $\real \subset \complex$. 
\end{proposition}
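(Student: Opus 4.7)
The plan is to identify the specialization $\I(\lambda_t, Q, \phi_k)|_{t=x}$ with the $\U\LLieg_x$-submodule of $i_*\phi_k$ generated by $\M$, and then to show that this submodule exhausts $i_*\phi_k$ for every $x \in \real$. The natural action map $i_*\DlQ \otimes_{\U\LLiek} \M \to i_*\phi_k[t]$ has image the $\U\LLieg_t$-subsheaf of $i_*\phi_k[t]$ generated by $\M$; since $i_*\phi_k[t]$ is $\complex[t]$-free (hence torsion-free), specialization at $t = x$ commutes with taking this submodule. Thus the proposition reduces to showing that $\M$ generates $i_*\phi_k$ as a $\U\LLieg_x$-module for each real $x$.

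For $x \in \real \setminus \{0\}$, the rescaling $E_x \leftrightarrow xE$, $F_x \leftrightarrow xF$, $H_x = H$ identifies the $\U\LLieg_x$-action on $i_*\phi_k$ with the standard $\slc$-action viewing $i_*\phi_k$ as the principal-series $\D_{\lambda/x}$-module with parameter $l/x$. Since the anchor $\LLieg \to TX$ is surjective, the image of $\U\LLieg$ in $\D_{\lambda/x}$ generates $\D_{\lambda/x}$ as an $\OO_X$-algebra, so any $\U\LLieg_x$-subsheaf of $i_*\phi_k$ is automatically a $\D_{\lambda/x}$-subsheaf. Because $\Img(l) \neq 0$ and $x \in \real \setminus \{0\}$, we have $\Img(l/x) \neq 0$, so $(l/x) + k$ is not an odd integer (indeed not an integer at all); by the reducibility criterion recalled just after \eqref{eq:second_trival}, $i_*\phi_k$ is irreducible as a $\D_{\lambda/x}$-module. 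Since $\M \neq 0$, the submodule it generates must be all of $i_*\phi_k$.

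For $x = 0$, formulas \eqref{eq:efh0} show that $E_0$ and $F_0$ act by multiplication by the nonzero functions $-l/(2z)$ and $-(l/2)z$ respectively (using $l \neq 0$), while $H_0 = -2z\dz$ preserves Laurent-monomial weights. We verify generation chartwise: on $U_0$, the sections $\{ z^{m + k/2} : m \geq 0 \}$ span $\M|_{U_0}$, and iterating $E_0$ shifts the exponent down by one at a time, producing every $z^{n + k/2}$ with $n \in \ZZ$, which exhausts $(i_*\phi_k)|_{U_0}$; symmetrically on $U_1$ using $F_0$. The main obstacle is precisely this $x = 0$ case: the Beilinson-Bernstein dictionary is unavailable, the operators degenerate to pure multiplications, and one must argue directly by chartwise computation, taking care that the stalks at $0$ and $\infty$ are fully captured. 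The restriction to $x \in \real$ is essential, since it excludes those resonant complex values of $t$ at which $l/t$ becomes an integer and $i_*\phi_k$ would be reducible as a $\D_{\lambda/t}$-module, allowing $\M$ to generate only a proper subsheaf.
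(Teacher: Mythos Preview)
Your argument is correct and reaches the same conclusion, but the route differs from the paper's. The paper gives a single inductive computation valid uniformly for every $x \in \real$, including $x=0$: starting from the generators in $\M$, one applies $F_t$ (resp.\ $E_t$) using \eqref{eq:efh_t} and observes that the coefficient $t\bigl(m + \tfrac{k+1}{2}\bigr) - \tfrac{l}{2}$ has imaginary part $-\tfrac{1}{2}\Img(l) \neq 0$ for any real $t$, hence is invertible, so one can solve for $z^{m+1+k/2}$ (and symmetrically for negative exponents). Your version instead splits cases: for $x \neq 0$ you appeal to the irreducibility criterion for $\Prin_{\lambda/x,k}$ (valid because $l/x \notin \real$), while for $x = 0$ you compute directly---and that direct computation is nothing but the $t=0$ specialization of the paper's uniform formula. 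The paper's approach is more elementary and avoids the case distinction; yours makes the link to the reducibility criterion explicit, at the price of handling $x=0$ separately.

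One caution on your reduction step: the assertion that ``$i_*\phi_k[t]$ is $\complex[t]$-free, hence specialization commutes with taking the submodule'' is not a valid general principle. Freeness of the ambient module does not ensure that the quotient $i_*\phi_k[t]/\I(\lambda_t,Q,\phi_k)$ is $(t-x)$-torsion-free, which is what injectivity of $\I(\lambda_t,Q,\phi_k)|_{t=x} \to i_*\phi_k$ would require. That said, the paper's own proof is equally informal on this point: it, too, only exhibits each $z^{n+k/2}$ in the image of the specialization, so both arguments are really establishing surjectivity, with the equality understood accordingly.
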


\begin{proof}
  We need to show that $z^{n \pm \frac{k}{2}} \in \I_\real(\lambda_t, Q ,\phi_k)$ for any $n \in \ZZ$ and $k= 0, 1$. This can be done by induction on $|n|$. The case of $n=0$ is easily deduced from the definition of $\M$. Now assume it is true when $|n| \leq m$ for some $m \in \N$. Then by \eqref{eq:efh_t}, we have
  \begin{equation}\label{eq:Ft}
    F_t z^{m + \frac{k}{2} } = \left[ t \left( m + \frac{k+1}{2} \right) - \frac{l}{2} \right] z^{m+1+\frac{k}{2}}.  
  \end{equation}
Since $\Img(l) \neq 0$, the coefficient on the right hand side of the equation above is invertible when $t = x \in \real$ and hence $z^{m+1+ \frac{k}{2}} \in \I(\lambda_t, Q ,\phi_k)|_{t = x}$. Similarly we see that $z^{-m-1 - \frac{k}{2}} \in \I_\real(\lambda_t, Q ,\phi_k)|_{t=x}$ by computing $E_t z^{-m-\frac{k}{2}}$.
\end{proof}

The argument in the proof of Lemma \ref{prop:newI} fails when $t \in \complex$ since the coefficient in \eqref{eq:Ft} could be zero. If $l$ is a nonzero real number, this happens infinitely many times for $t \in \real$. 

\begin{proposition}\label{prop:prin_int}
  Suppose $l$ is a nonzero real number, then the specialization of smooth family $\I(\lambda_t, Q ,\phi_k)$ at each $t \in \real$ is the nonunitary principal series representation $\Prin_{l \rho /t, k}$ when $l / t \notin \N$ and is the irreducible finite dimensional representation of $\SLC$ of highest weight $(l/t-1)\rho$ when $l / t \in \N$.
\end{proposition}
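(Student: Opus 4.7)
The plan is to invoke \eqref{eq:newI} and reduce to a direct computation using the formulas \eqref{eq:efh_t}. Because the underlying $\OO_X$-sheaf $i_*\phi_k$ does not depend on $t$, the specialization $\I(\lambda_t, Q, \phi_k)|_{t = x}$ is the $\U\Lieg_x$-submodule of $i_*\phi_k$ generated by $\M(Q, \phi_k)$. For $x \neq 0$, the formulas \eqref{eq:efh_t} yield $E_x = x\, E^{(l\rho/x)}$, $F_x = x\, F^{(l\rho/x)}$, and $H_x = H^{(l\rho/x)}$, where the superscripted operators furnish the twisted $\D$-module structure on $\Prin_{l\rho/x, k}$. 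Hence $\Lieg_x$ and $\Lieg$ act through the same subalgebra of differential operators on $i_*\phi_k$, and the specialization coincides with the $\U\Lieg$-submodule of $\Prin_{l\rho/x, k}$ generated by the minimal $K$-types $z^0$ (if $k = 0$) or $z^{\pm 1/2}$ (if $k = 1$).

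Iterating \eqref{eq:Ft} together with the companion formula $E_x\, z^{-m - k/2} = [x(m + (k+1)/2) - l/2]\, z^{-m - 1 - k/2}$ for $m \geq 0$, derived from \eqref{eq:efh_t}, one sees that the key coefficient $c_m := x(m + (k+1)/2) - l/2$ vanishes for some $m \in \N_{\geq 0}$ if and only if $l/x = 2m + k + 1$, which is precisely the reducibility criterion ``$l/x + k$ odd integer'' for $\Prin_{l\rho/x, k}$ recorded in \S\ref{subsec:principal}. If $l/x \notin \N$---or more generally whenever $\Prin_{l\rho/x, k}$ is irreducible---no $c_m$ vanishes, and a straightforward induction on $|n|$ shows that every basis section $z^{n + k/2}$, $n \in \ZZ$, lies in the submodule. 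Hence the specialization is the full $\Prin_{l\rho/x, k}$, i.e., the nonunitary principal series representation.

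If instead $l/x \in \N$ with $l/x + k$ odd, then $c_{m_0} = 0$ for $m_0 := (l/x - k - 1)/2 \geq 0$, which forces $F_x\, z^{m_0 + k/2} = 0$ and $E_x\, z^{-m_0 - k/2} = 0$; the submodule generated by $\M$ is therefore $\spn_\complex\{z^{n + k/2} : -m_0 \leq n \leq m_0\}$, a $\Lieg$-module of dimension $l/x$. By the discussion in \S\ref{subsec:principal}, this coincides with the space of global sections of the unique irreducible $\D$-subsheaf $\OO_\PP(l/x - 1) \subset \Prin_{l\rho/x, k}$, and by Borel--Weil--Bott it is the irreducible finite-dimensional $\SLC$-representation of highest weight $(l/x - 1)\rho$. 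The main delicate point is the parity bookkeeping: when $l/x \in \N$ but $l/x + k$ is even, $\Prin_{l\rho/x, k}$ is actually irreducible, so that case falls under the principal series conclusion rather than the finite-dimensional one, and the statement as phrased implicitly relies on the compatibility of $\phi_k$ with $\lambda - \rho$ for the finite-dimensional alternative to occur.
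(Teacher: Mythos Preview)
Your argument is correct and follows the same route as the paper: identify the specialization at $t=x$ with the $\U\Lieg$-submodule of $\Prin_{l\rho/x,k}$ generated by $\M$, then separate into the irreducible case (submodule is everything) and the reducible case (submodule is the finite-dimensional piece $\OO_{\PP}(l/x-1)$). The only difference is that the paper dispatches the irreducible case in one line by citing the irreducibility criterion from \cite{Milicic}, whereas you carry out the explicit induction on $|n|$ via the coefficients $c_m$; your version is more self-contained but not a genuinely different method.

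Your closing remark about parity is well taken and is not addressed in the paper's proof either: the condition ``$l/x\in\N$'' in the statement should really be read together with the reducibility condition ``$l/x+k$ odd'' from \S\ref{subsec:principal}, since for the other parity $\Prin_{l\rho/x,k}$ is irreducible and the specialization is the full principal series. This is a wrinkle in the formulation of the proposition rather than a gap in your proof.
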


\begin{proof}
Indeed, the specialization of $\I(\lambda_t, Q ,\phi_k)$ at $t$ is the $\OO_X$-submodule of $\I(l \rho /t , Q , \phi_k)$ generated by $\M$. When  $l/t \in \N$, the subsheaf is the line bundle $\OO(\lambda-\rho) = \OO_{\PP}(l/t-1)$. Otherwise the subsheaf is the entire $\I(l \rho /t , Q , \phi_k)$ since it is irreducible (\cite{Milicic}).
\end{proof}

We see that the new version of $\I(\lambda_t, Q ,\phi_k)$ differs from the original one when $l \in \real$. However, the specialization at $t=0$ and $t=1$ still gives the Mackey-Higson bijection. Indeed, when $l$ is a nonintegral real number, the specialization of the new family at $t=1$ is still the nonunitary principal series representation $\Prin_{l \rho ,k}$. When $l$ is a positive integer, the specialization of the new family at $t=1$ is already the finite dimensional subrepresentation $\Gamma(\PP, \OO_{\PP}(l-1))$ of $\Prin_{l \rho ,k}$. These cases coincide with the discussion at the end of \S ~ \ref{subsec:principal}. Moreover, we get the irreducible $(\Lieg,K)$-modules automatically.

Therefore we have extended Afgoustidis' Mackey-Higson bijection for tempered representations to admissible representations in the case of $\SLR$. 

\subsection{Intertwining operators}

There are intertwining functors among the categories of $\D_\lambda$-modules with different $\lambda$ in the Weyl group orbit (\cite{BB-Casselman}, \cite{Localization}). We show that they extend to isomorphisms among different $\I(\lambda, Q, \phi)$ and hence extend to isomorphisms among $(\Lieg_0,K)$-modules under the Mackey-Higson bijection.

Consider the principal series representation $\Prin_{\lambda,k} := \I(\lambda, Q, \phi_k)$, where $Q$ is the open orbit and $\lambda = l \rho$. Its global sections are spanned by the basis $e_p= z^{p+\frac{k}{2}}$, $p \in \ZZ$. Using the trivialization \eqref{eq:second_trival} of $\D_\lambda$ on $Q$ , we get
  \begin{equation}
    \begin{split}
      E e_p &= \left[ - \left(p+\frac{k}{2} \right) - \frac{l-1}{2}  \right] e_{p-1}  \\
      F e_p &= \left[ p + \frac{k}{2} - \frac{l-1}{2} \right] e_{p+1}  \\
      H e_p &= -2 \left(p+ \frac{k}{2} \right) e_p.
   \end{split}
  \end{equation}
Define rational functions $\alpha_p = \alpha_p(l)$, $p \in \ZZ$, such that $\alpha_{0} \equiv 1$ no matter when $k=0$ or $1$, and
  \begin{equation}\label{eq:alpha}
     \alpha_p =  \frac{- \left(p+\frac{k}{2} \right) + \frac{l+1}{2} }{- \left(p+\frac{k}{2} \right) + \frac{-l+1}{2} } \alpha_{p-1} = \frac{2p+k-l -1}{2p+k+l-1} \alpha_{p-1}. 
   \end{equation}
We form $f_p = \alpha_p e_p$, $p \in \ZZ$. In other words, we have
 \[ \alpha_p = \left\{ \begin{array}{ll}
                                    \displaystyle \prod_{j=1}^p \frac{2j+k-l-1}{2j+k+l-1}, & \quad p > 0, \\
                                     \\
                                    \displaystyle \prod_{j=1}^{|p|} \frac{2j-k-l-1}{2j-k+l-1}, & \quad p < 0.
                                    \end{array}
                            \right.\] 

This gives
   \begin{equation}\label{eq:intertwine}
    \begin{split}
      E f_p &= \alpha_p E e_p = \alpha_p \left[ - \left(p+\frac{k}{2} \right) - \frac{l-1}{2}  \right] e_{p-1} =  \left[ - \left(p+\frac{k}{2} \right) - \frac{-l-1}{2}  \right] f_{p-1}, \\
      F f_p &= \alpha_p F e_p = \alpha_p \left( p + \frac{k}{2} - \frac{l-1}{2} \right) e_{p+1}   = \left[ \left( p+ \frac{k}{2}  \right) - \frac{-l - 1 }{2} \right] f_{p+1}  \\
      H f_p &= -2 \left(p+ \frac{k}{2} \right) f_p.
   \end{split}
  \end{equation}
Since both $\Gamma(X,\Prin_{-\lambda,k})$ and $\Gamma(X,\Prin_{\lambda,k})$ are spanned by $e_p$, the equations \eqref{eq:intertwine} means that $A(\lambda,k) (e_p) = f_p$ defines a (normalized) intertwining operator 
  \[ A(\lambda,k): \Gamma(X,\Prin_{-\lambda,k}) \to \Gamma(X,\Prin_{\lambda,k})\]
of $(\Lieg,K)$-modules (\cite{Sally}). Now note that if we replace $l$ in the formulas above by $l/t$, the recursive formula \eqref{eq:alpha} for $\alpha_p$ will become
\begin{equation}\label{eq:alpha_t}
  \alpha_p(l/t)  = \frac{2p+k-l/t -1}{2p+k+l/t-1} \alpha_{p-1}(l/t) = \frac{-l + t(2p+k-1)}{l + t(2p+k-1)} \alpha_{p-1}(l/t). 
\end{equation}
If $\Img(l) \neq 0$, $\Gamma(X,\Prin_{\lambda,k})$ is the $(\Lieg,K)$ underlying a (nonunitary) principal series representation of $\SLR$. In this case the values of $a_p(l/t)$ are well-defined for all $t \in \real$ and they converge to constants $\alpha^{0}_p$ as $t \to 0$ and \eqref{eq:alpha_t} becomes
 \begin{equation}\label{eq:limit_alpha}
   \alpha^0_p  = -\alpha^0_{p-1}, \quad \forall p \in \ZZ. 
 \end{equation}
Hence $\alpha^0_p = (-1)^p$ no matter when $k=0$ or $1$. Moreover, $A(\lambda/t,k)$ and $A(-\lambda/t,k)$ are inverses to each other. We have proved the following result.
 
\begin{proposition}
For $\lambda=l \rho$ such that $\Img(l) \neq 0$, the intertwining operator $A(\lambda/t,k)$ from $\Gamma(X,\Prin_{-\lambda/t, k})$ to $\Gamma(X,\Prin_{\lambda/t,k})$ is an isomorphism of $(\Lieg_t,K)$-modules for $t \in \real$. The inverse of $A(\lambda/t,k)$ is $A(-\lambda/t,k)$.
\end{proposition}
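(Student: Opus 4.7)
The plan is to verify three things in order: (i) $A(\lambda/t,k)$ is well-defined and nonzero on every basis vector $e_p$ for all real $t$; (ii) the intertwining relations \eqref{eq:intertwine} upgrade from the $\Lieg$-action to the $\Lieg_t$-action; and (iii) $A(-\lambda/t,k)$ is a two-sided inverse. Together these immediately yield the stated isomorphism.

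First, I would establish well-definedness. The recursion \eqref{eq:alpha_t} expresses $\alpha_p(l/t)$ as a ratio whose numerator is $-l + t(2p+k-1)$ and denominator is $l + t(2p+k-1)$. For $t \in \real$ and $\Img(l) \neq 0$, both quantities have imaginary part $\mp \Img(l) \neq 0$, hence neither vanishes for any $p \in \ZZ$. By induction starting from $\alpha_0 = 1$, the scalars $\alpha_p(l/t)$ are finite and nonzero, so $A(\lambda/t,k)(e_p) := \alpha_p(l/t) e_p$ defines a $\complex$-linear bijection of $\complex$-vector spaces $\spn_\complex\{e_p\}_{p \in \ZZ}$.

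Second, for the intertwining property, the formulas \eqref{eq:intertwine} (with $l$ replaced by $l/t$) show that $A(\lambda/t,k)$ intertwines the standard $(\Lieg,K)$-actions on $\Gamma(X,\Prin_{-\lambda/t,k})$ and $\Gamma(X,\Prin_{\lambda/t,k})$. To promote this to the $\Lieg_t$-action, I recall from \eqref{eq:efh_t} that $E_t$, $F_t$, $H_t$ act on $\Gamma(X, \I(\lambda_t,Q,\phi_k))$ by the $\Lieg$-action at parameter $l/t$ rescaled by the scalar $t$ on $E$ and $F$ (and trivially on $H$). Because scalar multiplication commutes with the $\complex$-linear map $A(\lambda/t,k)$, the intertwining relations are preserved, and $K$-equivariance is immediate since each $e_p$ is a $K$-weight vector of weight $-(2p+k)$ on both sides.

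Third, I would compute $A(-\lambda/t,k) \circ A(\lambda/t,k)$ on $e_p$, which equals $\alpha_p(l/t)\,\alpha_p(-l/t) e_p$. From \eqref{eq:alpha_t}, swapping $l \to -l$ inverts each factor of the telescoping product, so $\alpha_p(l/t)\alpha_p(-l/t) = \alpha_{p-1}(l/t)\alpha_{p-1}(-l/t)$; combined with $\alpha_0 = 1$ this gives $\alpha_p(l/t)\alpha_p(-l/t) = 1$ for all $p$. Hence $A(-\lambda/t,k)$ is a two-sided inverse, completing the proof.

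There is no real obstacle: the whole point of the hypothesis $\Img(l)\neq 0$ is to rule out exactly the degeneracy (vanishing numerator or denominator in the intertwiner) that would otherwise occur, as signalled by Proposition \ref{prop:prin_int} in the real case. The only care needed is bookkeeping: distinguishing the parameter $l/t$ appearing inside the intertwiner from the rescaling factor $t$ relating the $\Lieg$-action to the $\Lieg_t$-action on the same underlying vector space.
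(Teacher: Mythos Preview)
Your proposal is correct and follows essentially the same argument as the paper: the paper's proof is the discussion immediately preceding the proposition, which observes that the recursion \eqref{eq:alpha_t} has no zeros or poles when $\Img(l)\neq 0$ and $t\in\real$, and then asserts that $A(\lambda/t,k)$ and $A(-\lambda/t,k)$ are inverses. You have simply spelled out in more detail the reason the factors never vanish (nonzero imaginary part $\mp\Img(l)$), the passage from the $\Lieg$-intertwining relation at parameter $l/t$ to the $\Lieg_t$-intertwining relation via the scalar rescaling in \eqref{eq:efh_t}, and the telescoping identity $\alpha_p(l/t)\alpha_p(-l/t)=1$.
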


Such intertwining operators fit nicely with our previous discussion of the $K$-orbits of $\Lies^*$ associated to $(\Lieg_0,K)$-modules. Recall that the equations \eqref{eq:efh0} determine an isomorphism $\Phi_\lambda$ between $Q$ and the $K$-orbit $C_\lambda$ in $\Lies^*$ determined by the equation \eqref{eq:orbit}. On the other hand, $C_\lambda = C_{-\lambda}$. Denote by $\Lambda$ the automorphism of $\Lies^*$ which sends any vector $v$ to $-v$. Then $\Lambda$ preserves $C_\lambda$  and intertwines $\Phi_{\lambda}$ with $\Phi_{-\lambda}$, as well as the line bundles over the orbits.

When $l=0$, equation \eqref{eq:alpha} becomes $\alpha_p=\alpha_{p-1}$ so $a_p=1$ for all $p \in \ZZ$. In this case we get nothing but the identity maps of $\Prin_{0,k}$ as well as the corresponding $(\Lieg_0,K)$-modules.

In general, when $l \neq 0$ is a real number or $t \in \complex$, the values of $l/t$ can take integral values infinitely many times as shown in Proposition \ref{prop:prin_int}, so that $\alpha_p(l/t)$ have zeros or poles as rational functions in $t$. At those values $A(\lambda/t,k)=A(l \rho/t, k)$ are no longer isomorphisms or not well-defined. In particular, when $l/t$ is a positive integer the map 
  \[ A(l \rho/t, k): \Gamma(X,\Prin_{-\lambda/t, k}) \to \Gamma(X,\Prin_{\lambda/t,k}) \] 
is still defined but it is of finite rank and its image in $\Gamma(X,\Prin_{\lambda/t,k})$ is the unique finite dimensional irreducible submodule as in Proposition \ref{prop:prin_int}. Nevertheless, the limit of $\alpha_p(l/t)$ as $t \to 0$ still exists as in \eqref{eq:limit_alpha} and gives rise to the intertwining operator between the irreducible motion group representations $M_0(-\lambda, Q, \phi)$ and $M_0(\lambda, Q, \phi)$ as discussed above. It does not conflict with the Mackey-Higson bijection, however, since $\Gamma(X,\Prin_{l \rho,k})$ with negative integral $l$ is reducible and so it is excluded from consideration for the Mackey-Higson bijection.

\bibliographystyle{alpha}
\bibliography{MackeySL2.bib}

\end{document}